\documentclass[reqno]{amsart}

\usepackage[OT1]{fontenc}
\usepackage[utf8]{inputenc}

\usepackage{mathtools}
\usepackage{subfigure,graphicx}
\usepackage{sidecap}
\usepackage{color}
\usepackage{natbib}
\usepackage{hyperref}
\hypersetup{colorlinks,urlcolor=blue,citecolor=blue}

\usepackage{threeparttable} 

\newtheorem{theorem}{Theorem}[section]

\theoremstyle{definition}
\newtheorem{definition}[theorem]{Definition}

\theoremstyle{remark}
\newtheorem{remark}[theorem]{Remark}

\numberwithin{equation}{section}

\def\Rset{\mathbb{R}}

\newcommand{\bm}[1]{\mbox{\boldmath $#1$}} 

\makeatletter
\def\tagform@#1{\maketag@@@{\color{blue}(\ignorespaces#1\unskip\@@italiccorr)}}
\makeatother

\graphicspath{{figs/}}
\DeclareGraphicsExtensions{.pdf,.jpg,.png}

\counterwithin{table}{section}
\counterwithin{figure}{section}

\begin{document}

\title{An invariant modification of the bilinear form test}

\author{\'Angelo G\'arate}
\address{Departamento de Estad\'istica, Pontificia Universidad Cat\'olica de Chile} 
\curraddr{Avenida Vicu\~na Mackena 4860, Santiago, Chile}
\email{\href{mailto:afgarate@uc.cl}{afgarate@uc.cl}}

\author{Felipe Osorio}
\email{\href{mailto:faosorios.stat@gmail.com}{faosorios.stat@gmail.com}}
\thanks{\emph{Orcid ID:} \href{https://orcid.org/0000-0002-4675-5201}{0000-0002-4675-5201} (F. Osorio), 
  \href{https://orcid.org/0000-0003-3969-5914}{0000-0003-3969-5914} (F. Crudu).}

\author{Federico Crudu}
\address{Department of Economics and Statistics, University of Siena, Italy}
\curraddr{Piazza San Francesco, 7/8 53100 Siena, Italy.}
\email{\href{mailto:federico.crudu@unisi.it}{federico.crudu@unisi.it}}

\keywords{Bilinear form test, extremum estimation, gradient statistic, invariance, nonlinear hypothesis, reparametrization.}

\maketitle

\begin{abstract}
The invariance properties of certain likelihood-based asymptotic tests as well as 
their extensions for \emph{M}-estimation, estimating functions and the generalized method 
of moments have been well studied. The simulation study reported in \citeauthor{Crudu:2020} 
[Econ. Lett. 187: 108885, 2020] shows that the bilinear form test is not invariant to one-to-one 
transformations of the parameter space. This paper provides a set of suitable conditions 
to establish the invariance property under reparametrization of the bilinear form test for 
linear or nonlinear hypotheses that arise in extremum estimation which leads to a simple 
modification of the test statistic. Evidence from a Monte Carlo simulation experiment 
suggests good performance of the proposed methodology.
\end{abstract}

\section{Introduction}\label{sec:Intro}

Recently, \cite{Dufour:2017} have studied the invariance properties of various testing
procedures for nonlinear hypotheses in the context of $M$-estimation, inference functions
and the generalized method of moments (GMM) to equivalent reformulation and reparametrization 
of the hypotheses of interest. The main conclusion of their study is that as in the likelihood 
framework \citep[see][]{Dagenais:1991}, the Lagrange multiplier-type statistics, $C(\alpha)$-type 
and the distance metric criterion proposed by \cite{Newey:1987} are invariant to reformulations 
of the null hypothesis and to one-to-one transformations of the parameter space, whereas the 
Wald-type statistic does not possess such properties. Although the Wald-type statistic cannot 
be recommended in such circumstances, some solutions \citep{Kemp:2001} have been proposed to 
make the test invariant to general reparametrizations.

It is worth emphasizing that the estimation mechanisms considered by \cite{Dufour:2017} belong
to the class of extremum estimators \citep[see, for instance][]{Gourieroux:1995, Hayashi:2000}.
Besides, they only consider test statistics that correspond to extensions of the so-called 
``holy trinity" \citep[see][]{Muggeo:2014}. By contrast, \cite{Crudu:2020} proposed a competitive 
procedure for testing nonlinear hypotheses, based on a bilinear form (BF) statistic, in the 
context of extremum estimation that generalizes the gradient statistic of \cite{Terrell:2002}. 
As stated in \cite{Rao:2005}, additional research is needed to better understand the behavior 
of the gradient statistic. This has prompted a series of works regarding various aspects of 
the asymptotic behavior of the gradient statistic \cite[see][and references therein]{Lemonte:2013}. 
The simulation study in \cite{Crudu:2020} revealed that the BF test lacks invariance under 
equivalent null hypotheses. The main aim of this work is to provide suitable conditions 
to ensure the invariance under reparametrization property of the BF test based on extremum 
estimation.

This paper is organized as follows. In Section \ref{sec:BF}, the bilinear form test for
extremum estimation is revisited. Section \ref{sec:invariance} provides sufficient conditions 
for the invariance of the bilinear form test statistic. In Section \ref{sec:experiments},
we report our numerical findings that allow us to illustrate the invariance to one-to-one 
transformations of the parameter space, i.e., reparametrizations. Finally some concluding 
remarks are discussed in Section \ref{sec:conclusion}.

\section{The bilinear form test}\label{sec:BF}

The general class of estimating procedures known as extremum estimation \citep{Gourieroux:1995} 
is defined by optimizing an objective function $Q_n(\bm{\theta})$, which depends on observed 
data $\bm{z}_1,\dots,\bm{z}_n$ and a parameter vector $\bm{\theta}\in\Theta\subset\Rset^p$, which 
includes, for example, GMM \citep{Hansen:1982}, $M$-estimation \citep{Huber:1981}, and maximum 
likelihood estimation under model misspecification \citep{White:1982}. In \cite{Crudu:2020}, 
the BF statistic was introduced to test nonlinear hypotheses of the form,
\begin{equation}\label{eq:H0}
  H_0: \bm{g}(\bm{\theta}) = \bm{0}, \qquad \text{against} \qquad H_1: \bm{g}(\bm{\theta})
  \neq \bm{0},
\end{equation}
where $\bm{g}:\Theta\to \Rset^q$ is a continuously differentiable function of $\bm{\theta}$
and $\bm{G}(\bm{\theta}) = \partial\bm{g}(\bm{\theta})/\partial\bm{\theta}^\top$ is
a $q\times p$ matrix with full row rank. Define $\bm{A}_n(\bm{\theta}) = \partial^2 
Q_n(\bm{\theta})/\partial\bm{\theta}\partial\bm{\theta}^\top$. The following regularity 
conditions are assumed:
\begin{itemize}
	\item[] \emph{A1.} $\bm{A}_n(\bm{\theta})\stackrel{\sf a.s.}{\longrightarrow}\bm{A}$ uniformly 
  in $\bm{\theta}$ with $\bm{A}$ nonsingular matrix;
	\item[] \emph{A2.} $\sqrt{n}\,\partial Q_n(\bm{\theta})/\partial\bm{\theta} \stackrel{\sf D}
  {\longrightarrow}\mathsf{N}_p(\bm{0},\bm{B})$.
\end{itemize}
Then, the BF statistic for testing the hypothesis defined in \eqref{eq:H0} is given by
\begin{equation}\label{eq:BF}
  BF_n(\bm{g}) = n\Big\{\frac{\partial Q_n(\widetilde{\bm{\theta}}_n)}{\partial\bm{\theta}}\Big\}^\top
  \bm{G}^+\bm{S\Omega}^{-1}\bm{g}(\widehat{\bm{\theta}}_n),
\end{equation}
where $\bm{G} = \bm{G}(\bm{\theta})$, $\bm{S} = \bm{G}(-\bm{A})^{-1}\bm{G}^\top$, $\bm{\Omega} 
= \bm{GA}^{-1}\bm{BA}^{-1}\bm{G}^\top$ with $\bm{G}^+ = \bm{G}^\top (\bm{GG}^\top)^{-1}$ being 
the Moore-Penrose inverse of $\bm{G}$, $\widehat{\bm{\theta}}_n$ is the unrestricted extremum 
estimator and the constrained estimator $\widetilde{\bm{\theta}}_n$ is the solution to the problem:
\[
  \max_{\theta\in\Theta}\ Q_n(\bm{\theta}), \qquad \text{subject to: $\bm{g}(\bm{\theta})
  =\bm{0}$}.
\]
Given Assumptions \textit{A1}-\textit{A2} and under $H_0$, the $BF_n(\bm{g})$ statistic
given in (\ref{eq:BF}) asymptotically has a chi-square distribution with $q$ degrees of
freedom. Details about the asymptotic distribution for the BF statistic and its asymptotic
equivalence with the Lagrange multiplier test statistic can be found in \cite{Crudu:2020}.

\smallskip

\begin{remark}
  The test statistic $BF_n(\bm{g})$ in Equation \eqref{eq:BF} is not feasible, as matrices $\bm{G}^+$, 
  $\bm{S}$ and $\bm{\Omega}$ are typically unknown. To make $BF_n(\bm{g})$ feasible we can 
  replace those matrices with consistent estimators, say, $\widetilde{\bm{G}}{}^+$, $\widetilde{\bm{S}}$ 
  and $\widetilde{\bm{\Omega}}$, evaluated at $\widetilde{\bm{\theta}}_n$.
\end{remark}

\section{An invariant bilinear form test}\label{sec:invariance}

Let us consider a reparameterization of the parameter space \citep[see, for instance][]{Dufour:2017}
defined by a one-to-one differentiable transformation $\bm{\phi}:\Theta\to\Theta_*$ with
$\Theta\subset\Rset^p$ and $\Theta_*\subset\Rset^p$ such that $\bm{\phi}(\bm{\theta}) =
\bm{\theta}_*$ and the inverse function of $\bm{\phi}$ satisfies $\bm{\phi}^{-1}(\bm{\theta}_*)
= \bm{\theta}$. Suppose that the following holds
\begin{equation}\label{eq:condition}
  \bm{g}_*(\bm{\theta}_*) = \bm{g}(\bm{\phi}^{-1}(\bm{\theta}_*)),
\end{equation}
then, the null hypotheses $H_0:\bm{g}(\bm{\theta}) = \bm{0}$ and $H_0^*:\bm{g}_*(\bm{\theta}_*)
= \bm{0}$ are considered equivalent representations of the same hypothesis, provided that
$\bm{g}(\bm{\theta}) = \bm{0}$ if and only if $\bm{g}_*(\bm{\theta}_*) = \bm{0}$, in which case
we say that the test is invariant to reparametrization \citep{Dufour:2017}. The following
theorem states sufficient conditions for the invariance to reparametrization for the BF test.

\smallskip

\begin{theorem}\label{thm:1}
  Let $\bm{g}:\Theta_*\to \Rset^q$ be a continuously differentiable function in $\bm{\theta}_*
  \in\Theta_*$ such that $\bm{g}_*(\bm{\phi}(\bm{\theta})) = \bm{0}$ if only if $\bm{g}(\bm{\theta})
  =\bm{0}$. Let us consider the following assumptions:
  \begin{itemize}
    \item[] B1. $\bm{G}_*(\bm{\theta}_*) = \bm{G}(\bm{\theta})\bm{K}(\bm{\phi}(\bm{\theta}))$,
    where $\bm{G}_*(\bm{\theta}_*) = \partial\bm{g}_*(\bm{\theta}_*)/\partial\bm{\theta}_*^\top$,  
    $\bm{K}(\bm{\theta}_*) = \partial\bm{\phi}^{-1}(\bm{\theta}_*)/\partial\bm{\theta}_*^\top$;
    \item[] B2. $[\bm{G}(\bm{\theta})\bm{K}(\bm{\phi}(\bm{\theta}))]^+ =
    \bm{K}^+(\bm{\phi}(\bm{\theta}))\bm{G}^+(\bm{\theta})$;
    \item[] B3. $\partial Q_n(\bm{\theta}_*)/\partial\bm{\theta}_* = \bm{K}^\top(\bm{\phi}(\bm{\theta}))
    \,\partial Q_n(\bm{\theta})/\partial\bm{\theta}$;
    \item[] B4. $\bm{A}_*(\bm{\theta}_*) = \bm{K}^\top(\bm{\phi}(\bm{\theta}))\bm{A}
    \bm{K}(\bm{\phi}(\bm{\theta}))$;
    \item[] B5. $\bm{B}_*(\bm{\theta}_*) = \bm{K}^\top(\bm{\phi}(\bm{\theta}))\bm{B}
    \bm{K}(\bm{\phi}(\bm{\theta}))$;
    \item[] B6. $\bm{g}_*(\bm{\theta}_*) = \bm{g}(\bm{\phi}^{-1}(\bm{\theta}_*))$.
  \end{itemize}
  Then, the bilinear form test statistic given in Equation (\ref{eq:BF}) is invariant,
  i.e. $BF_n(\bm{g}_*) = BF_n(\bm{g})$.
\end{theorem}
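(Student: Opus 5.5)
The plan is a direct substitution argument. I would write out $BF_n(\bm{g}_*)$ from \eqref{eq:BF} in the $\bm{\theta}_*$-parametrization, replace each ingredient by its $\bm{\theta}$-counterpart using B1--B6, and check that all the Jacobian factors $\bm{K}$ cancel.

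The first step is to fix the evaluation points. Because $\bm{\phi}$ is one-to-one, maximizing $Q_n$ over $\Theta_*$ is the same as maximizing it over $\Theta$. Hence the unrestricted estimators satisfy $\widehat{\bm{\theta}}_{*n}=\bm{\phi}(\widehat{\bm{\theta}}_n)$. The hypothesis of the theorem says $\bm{g}_*(\bm{\phi}(\bm{\theta}))=\bm{0}$ if and only if $\bm{g}(\bm{\theta})=\bm{0}$, so the two constraint sets correspond under $\bm{\phi}$ and likewise $\widetilde{\bm{\theta}}_{*n}=\bm{\phi}(\widetilde{\bm{\theta}}_n)$. This equivariance is what allows every $\bm{K}$ below to be evaluated at the common point $\bm{\phi}(\widetilde{\bm{\theta}}_n)$, or at $\bm{\phi}(\bm{\theta})$ for the population matrices.

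I will also use that $\bm{K}$ is a square $p\times p$ matrix, being the Jacobian of $\bm{\phi}^{-1}$. I take it to be nonsingular, as for a diffeomorphism, so that $\bm{K}^+=\bm{K}^{-1}$. I expect this to be the main point needing care. One has to state explicitly that the Jacobian is invertible, since mere differentiability of a bijection does not guarantee it; I would add this as an implicit standing assumption. The role of B2 is then to supply $\bm{G}_*^+=(\bm{G}\bm{K})^+=\bm{K}^{-1}\bm{G}^+$, which does not hold for general products of pseudo-inverses.

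With these facts the computation runs term by term:
\begin{itemize}
\item By B6 and equivariance, $\bm{g}_*(\widehat{\bm{\theta}}_{*n})=\bm{g}(\widehat{\bm{\theta}}_n)$.
\item By B3, $\partial Q_n(\widetilde{\bm{\theta}}_{*n})/\partial\bm{\theta}_*=\bm{K}^\top\,\partial Q_n(\widetilde{\bm{\theta}}_n)/\partial\bm{\theta}$.
\item By B1 and B4, $\bm{S}_*=\bm{G}\bm{K}(-\bm{K}^\top\bm{A}\bm{K})^{-1}\bm{K}^\top\bm{G}^\top=\bm{G}\bm{K}\bm{K}^{-1}(-\bm{A})^{-1}\bm{K}^{-\top}\bm{K}^\top\bm{G}^\top=\bm{S}$.
\item By B1, B4 and B5, $\bm{\Omega}_*=\bm{G}\bm{K}(\bm{K}^\top\bm{A}\bm{K})^{-1}\bm{K}^\top\bm{B}\bm{K}(\bm{K}^\top\bm{A}\bm{K})^{-1}\bm{K}^\top\bm{G}^\top=\bm{\Omega}$, after the analogous cancellations.
\item By B1 and B2, $\bm{G}_*^+=\bm{K}^{-1}\bm{G}^+$.
\end{itemize}

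Substituting these into \eqref{eq:BF} gives
\[
BF_n(\bm{g}_*)=n\Big\{\frac{\partial Q_n(\widetilde{\bm{\theta}}_n)}{\partial\bm{\theta}}\Big\}^\top\bm{K}\bm{K}^{-1}\bm{G}^+\bm{S}\bm{\Omega}^{-1}\bm{g}(\widehat{\bm{\theta}}_n)=BF_n(\bm{g}).
\]
Finally, I would remark that the same cancellations hold for the feasible version in the Remark. There every matrix is evaluated at $\widetilde{\bm{\theta}}_n$, respectively $\widetilde{\bm{\theta}}_{*n}=\bm{\phi}(\widetilde{\bm{\theta}}_n)$, so the identical algebra applies provided the estimators $\widetilde{\bm{A}}_*$ and $\widetilde{\bm{B}}_*$ transform as in B4 and B5.
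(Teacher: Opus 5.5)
Your proposal is correct and follows the paper's proof essentially step for step. It shows $\bm{S}_*=\bm{S}$ from B1 and B4 and $\bm{\Omega}_*=\bm{\Omega}$ from B1, B4 and B5, then uses B2, B3 and B6 with $\bm{K}\bm{K}^+=\bm{K}\bm{K}^{-1}=\bm{I}$ to remove the Jacobian from the final expression. Your explicit equivariance of the estimators under $\bm{\phi}$ and invertibility of $\bm{K}$ are used only tacitly in the paper. As in the paper, though, the last cancellation is exact only when the $\bm{K}$ from B3 and the $\bm{K}^{+}$ from B2 are evaluated at the same point, which holds in the feasible version you mention at the end.
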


\begin{proof}
  For simplicity of notation we define $\bm{G} = \bm{G}(\bm{\theta})$ and $\bm{K} = \bm{K}(\bm{\phi}(\bm{\theta}))$. 
  Using assumptions \textit{B1} and \textit{B4} we obtain
  \begin{align*}
    \bm{S}_* & = \bm{G}_*(\bm{\theta}_*)(-\bm{A}_*(\bm{\theta}_*))^{-1}\bm{G}_*^\top(\bm{\theta}_*) 
    = \bm{GK}(-\bm{K}^\top\bm{AK})^{-1}(\bm{GK})^\top \\
    & = \bm{GK}\bm{K}^{-1}(-\bm{A})^{-1}(\bm{K}^\top)^{-1}\bm{K}^\top\bm{G}^\top 
    = \bm{G}(-\bm{A})^{-1}\bm{G}^\top = \bm{S}.
  \end{align*}
  Moreover, by assumptions \textit{B1} and \textit{B5}
  \begin{align*}
    \bm{\Omega}_* & = \bm{G}_*(\bm{\theta}_*)\bm{A}_*^{-1}(\bm{\theta}_*)\bm{B}_*(\bm{\theta}_*)
    \bm{A}_*^{-1}(\bm{\theta}_*)\bm{G}_*^\top(\bm{\theta}_*) \\
    & = \bm{GK}(\bm{K}^\top\bm{AK})^{-1}\bm{K}^\top\bm{BK}(\bm{K}^\top\bm{AK})^{-1}(\bm{GK})^\top \\
    & = \bm{GKK}^{-1}\bm{A}^{-1}(\bm{K}^\top)^{-1}\bm{K}^\top\bm{BKK}^{-1}\bm{A}^{-1}(\bm{K}^\top)^{-1}
    \bm{K}^\top\bm{G}^\top \\
    & = \bm{GA}^{-1}\bm{BA}^{-1}\bm{G}^\top = \bm{\Omega}.
  \end{align*}
  Finally, with assumptions \textit{B2}, \textit{B3}, and \textit{B6} we obtain,
  \begin{align*}
    BF_n(\bm{g}_*) & = n\Big\{\frac{\partial Q_n(\widetilde{\bm{\theta}}_*)}{\partial\bm{\theta}_*}\Big\}^\top
    \bm{G}_*^+(\bm{\theta}_*)\bm{S}_*\bm{\Omega}_*^{-1}\bm{g}_*(\bm{\theta}_*) \\ 
    & = n\Big\{\frac{\partial Q_n(\widetilde{\bm{\theta}}_n)}{\partial\bm{\theta}}\Big\}^\top
    \bm{K}(\bm{GK})^+\bm{S}\bm{\Omega}^{-1}\bm{g}(\bm{\phi}^{-1}(\bm{\theta}_*)) \\
    & = n\Big\{\frac{\partial Q_n(\widetilde{\bm{\theta}}_n)}{\partial\bm{\theta}}\Big\}^\top
    \bm{K}\bm{K}^+\bm{G}^{+}\bm{S}\bm{\Omega}^{-1}\bm{g}(\bm{\theta}) \\
    & = n\Big\{\frac{\partial Q_n(\widetilde{\bm{\theta}}_n)}{\partial\bm{\theta}}\Big\}^\top
    \bm{G}^+\bm{S}\bm{\Omega}^{-1}\bm{g}(\bm{\theta}) = BF_n(\bm{g}),
  \end{align*}
  as desired.
\end{proof}

Clearly, conditions \textit{B4} and \textit{B5} can be disregarded when $\bm{B} =
-\bm{A}$ which occurs, for example, when the objective function $Q_n(\bm{\theta})$
is based on a quasi-score estimating equation. 
This is particularly relevant in the context of GMM, where it is beneficial to use 
an invariant test that avoids computing matrices $\bm{A}$ and $\bm{B}$.

It is worth noting that \cite{Greville:1966} highlights the conditions necessary to 
validate assumptions like the one stated in \textit{B2}. To establish \textit{B2}, 
it is sufficient to verify the following identity \citep[see Theorem 2 of][]{Greville:1966},
\begin{equation}\label{eq:Greville-T2a}
  \bm{G}^+\bm{GKK}^\top = \bm{KK}^\top\bm{G}^+\bm{G}.
\end{equation}
Here, $\bm{G}$ is a full row rank matrix, which implies that its Moore-Penrose inverse 
is given by $\bm{G}^+ = \bm{G}^\top(\bm{GG}^\top)^{-1}$. This expression facilitates the 
verification of identity \eqref{eq:Greville-T2a}. Additionally, $\bm{K}$ is assumed to 
be a full-rank matrix, thus we have $\bm{KK}^+ = \bm{KK}^{-1} = \bm{I}$ which can be used 
to demonstrate the symmetry of the product $\bm{G}^\top\bm{GKK}^+$. Interestingly, Assumption 
\textit{B2} is specific to the BF test, while Assumptions \textit{B4} and \textit{B5} 
are in common with the $C(\alpha)$-type statistic and, are also required for Lagrange 
multiplier-type tests \citep[see][]{Dufour:2017}. The set of conditions of Theorem \ref{thm:1} 
allows us to introduce the following correction of the bilinear form statistic that is 
invariant to the definition of the null hypothesis.

\smallskip

\begin{definition}
  Under the conditions established in Theorem \ref{thm:1}, we can define a corrected version 
  of the $BF$ statistic as:
  \begin{equation}\label{eq:corrected}
    BF_n^c = n\Big\{\frac{\partial Q_n(\widetilde{\bm{\theta}}_*)}{\partial\bm{\theta}_*}\Big\}^\top
    \bm{G}_*^+\bm{S}_*\bm{\Omega}_*^{-1}\bm{g}_*(\widehat{\bm{\theta}}_*),
  \end{equation}
  where $\bm{G}_* = \bm{G}_*(\bm{\theta}_*)$, $\bm{S}_* = \bm{G}_*(-\bm{A}_*)^{-1}\bm{G}_*^\top$, 
  $\bm{\Omega}_* = \bm{G}_*\bm{A}_*^{-1}$ $\bm{B}_*\bm{A}_*^{-1}\bm{G}_*^\top$. 
\end{definition}

\begin{table*}[!htp]
  \begin{center}
  \caption{Empirical size of $5\%$ test. $W$, $BF$ and $W^*$, $BF^*$ refer to that the 
  Wald and BF statistics are computed using the null hypotheses in Equations (\ref{eq:H0A}) 
  and (\ref{eq:H0B}), respectively.}\label{tab:exp1}
  \begin{tabular}{rrccccccc} \hline
    $k$ & $n$  & $W$   & $W^*$ & $BF$  & $BF^*$ & $BF^c$ & $LM$ & $D$  \\ \hline
     5  &   25 & 0.058 & 0.246 & 0.058 & 0.184 & 0.058 & 0.058 & 0.058 \\
        &   50 & 0.049 & 0.216 & 0.049 & 0.154 & 0.049 & 0.049 & 0.049 \\
        &  100 & 0.049 & 0.155 & 0.049 & 0.108 & 0.049 & 0.049 & 0.049 \\
        &  500 & 0.051 & 0.084 & 0.051 & 0.065 & 0.051 & 0.051 & 0.051 \\\hline
     2  &   25 & 0.058 & 0.127 & 0.058 & 0.203 & 0.216 & 0.058 & 0.058 \\
        &   50 & 0.049 & 0.104 & 0.049 & 0.073 & 0.049 & 0.049 & 0.049 \\
        &  100 & 0.049 & 0.074 & 0.049 & 0.065 & 0.049 & 0.049 & 0.049 \\
        &  500 & 0.051 & 0.058 & 0.051 & 0.055 & 0.051 & 0.051 & 0.051 \\\hline
    -2  &   25 & 0.058 & 0.155 & 0.058 & 0.137 & 0.080 & 0.058 & 0.058 \\
        &   50 & 0.049 & 0.135 & 0.049 & 0.085 & 0.036 & 0.049 & 0.049 \\
        &  100 & 0.049 & 0.103 & 0.049 & 0.066 & 0.042 & 0.049 & 0.049 \\
        &  500 & 0.051 & 0.058 & 0.051 & 0.050 & 0.051 & 0.051 & 0.051 \\\hline
    -5  &   25 & 0.058 & 0.234 & 0.058 & 0.163 & 0.058 & 0.058 & 0.058 \\
        &   50 & 0.049 & 0.216 & 0.049 & 0.143 & 0.049 & 0.049 & 0.049 \\
        &  100 & 0.049 & 0.171 & 0.049 & 0.108 & 0.049 & 0.049 & 0.049 \\
        &  500 & 0.051 & 0.096 & 0.051 & 0.060 & 0.051 & 0.051 & 0.051 \\\hline
  \end{tabular}
  \end{center}
\end{table*}

\section{Monte Carlo simulations}\label{sec:experiments}

Based on the simulation study by \cite{Lafontaine:1986}, \cite{Goh:1996} examined the small-sample 
performance of corrections to the Wald statistic proposed by \cite{Phillips:1988}. These corrections, 
based on Edgeworth expansions, aim to accelerate the convergence of the test statistic to its asymptotic 
distribution under two algebraically equivalent null hypotheses, namely: 
\begin{align}
  H_0: {} & g(\gamma,\beta) = \beta - 1 = 0, \label{eq:H0A} \\
  H_0^*: {} & g_*(\gamma_*,\beta_*) = \beta_*^k - 1 = 0, \label{eq:H0B}
\end{align}
where $k$ is a non-zero integer, and $\gamma$, $\beta$ are regression coefficients in the following 
model:
\begin{equation}\label{eq:model}
  \bm{y} = \bm{x}_1\gamma + \bm{x}_2\beta + \bm{\epsilon},
\end{equation}
with $\bm{x}_1$, $\bm{x}_2$ denoting $n\times 1$ vector of covariates and $\bm{\epsilon}$ 
representing an $n\times 1$ vector of random disturbances.

Next, we examine the performance of the proposed correction for the BF test statistic by 
carrying out a comparison against the Wald $(W)$, Lagrange multiplier $(LM)$, bilinear 
form $(BF)$, and metric distance $(D)$ statistics considering the equivalent hypotheses 
given in \eqref{eq:H0A} and \eqref{eq:H0B}. For definitions of each of these statistics 
in the context of extremum estimation refer to \cite{Dufour:2017}, \cite{Crudu:2020} and 
\cite{Newey:1987}, respectively.

\subsection{Simulation setup}

Following the simulation study reported by \cite{Goh:1996}, we considered model given in 
\eqref{eq:model} with $\bm{\theta} = (\gamma,\beta)^\top = (1,1)^\top$ and hypotheses defined 
in \eqref{eq:H0A} and \eqref{eq:H0B} for $k \in \{-5,-2,2,5\}$. $10\,000$ were generated 
with sample sizes $n=25, 50, 100$ and $500$ from model \eqref{eq:model} where $\bm{x}_j = 
(x_{1j},\dots,x_{nj})^\top$ with $x_{ij}\sim\mathsf{U}(0, 1)$ for $i=1,\dots,n$; $j=1,2$, and
$\bm{\epsilon}\sim\mathsf{N}_n(\bm{0}, \sigma^2\bm{I})$, where $\sigma^2 = 1$.

For the hypotheses $H_0$ and $H_0^*$ defined in \eqref{eq:H0A} and \eqref{eq:H0B}, respectively, 
we obtain that the reparameterization $\bm{\phi}(\bm{\theta}) = \bm{\phi}(\gamma,\beta)$, with 
\[
  \bm{\phi}^{-1}(\gamma_*,\beta_*) = (\gamma_*, \beta_*^k)^\top,
\]
satisfies the condition \textit{B6}. In accordance with hypothesis \eqref{eq:H0B} we obtain
\[
  \bm{K}(\bm{\theta}_*) = \begin{pmatrix}
    1 & 0 \\
    0 & k\beta_*^{k-1}
  \end{pmatrix}, \qquad \bm{G}_*^+(\bm{\theta}_*) = \begin{pmatrix}
    0 \\
    k^{-1}\beta_*^{1-k}
  \end{pmatrix},
\]
and the verification of conditions \textit{B1} and \textit{B2} is straightforward. In this 
example we have that $\bm{B} = -\bm{A}$, which yields to a simplified version of the BF 
statistic. Thus, only conditions \textit{B1}, \textit{B3} and \textit{B6} are required to 
obtain the corrected test statistic given in Equation \eqref{eq:corrected}. 

\subsection{Simulation results}

In the Monte Carlo experiment a $5\%$ level test was performed.\footnote{The replication files 
related to this article are available online at \url{https://github.com/faosorios/BF_invariance}} Table \ref{tab:exp1} displays 
the results of testing $H_0$ and $H_0^*$ for four values of $k$. As expected all statistics 
are in agreement with the linear hypothesis in \eqref{eq:H0A}, whereas the Wald statistic and 
the uncorrected version for the BF statistic show obvious invariance problems with empirical 
sizes that approach the nominal level as the sample size increases. The results suggest that 
the proposed correction (reported as $BF^c$ in Table \ref{tab:exp1}) allows the empirical size 
of the BF statistic to be consistently closer to the nominal value, and except for some cases 
when the sample size is small, e.g. for $k = 2$ and $n = 25$. It is worth noting that this test 
aligns with the metric distance and Lagrange multiplier statistics, $D$ and $LM$ which are known 
to be invariant.

\section{Concluding remarks}\label{sec:conclusion}

In this paper we study conditions that must be imposed on the BF statistic to avoid the 
undesirable property of non-invariance to reformulations of the hypothesis of interest 
generated by reparametrizations. Although the results of the simulation study seem promising, 
it should be noted that the required conditions are not always satisfied. For example, 
for the problem introduced by \cite{Gregory:1985} with two equivalent null hypotheses, 
\[
  H_0: \theta_1 - 1/\theta_2 = 0, 
  \qquad \textrm{and} \qquad 
  H_0^* : \theta_{*1}\theta_{*2} - 1 = 0, 
\]
we may note that the reparametrization $\bm{\phi}(\theta_1,\theta_2)$, with 
\[
  \bm{\phi}^{-1}(\theta_{*1},\theta_{*2}) = ((\theta_{*1} + 1)\theta_{*2} - 1,1/\theta_{*2})^\top,
\] 
satisfies the condition in \eqref{eq:condition}, but it does not satisfy Assumption \textit{B2}. 
Thus, as with the Wald test, caution is required in the use of the BF statistic. The Monte Carlo 
experiment in Section \ref{sec:experiments} is insightful, as it highlights the deterioration of 
the uncorrected BF statistic as nonlinearity increases. In light of the above results, further 
research is needed to better understand the properties and finite sample behavior of the BF 
statistic. A promising avenue of research is the application of the corrections based on the 
asymptotic expansions developed in \cite{Phillips:1988}.

\section*{Acknowledgements}

This work was written while the second author was at Universidad T\'ecnica Federico Santa Mar\'ia.
The authors acknowledge the support of the computing infrastructure of the Applied 
Laboratory of Spatial Analysis UTFSM\,-\,ALSA (MT7018). \'Angelo G\'arate was supported 
by the National Scholarship Program of Chile, ANID-Chile. Federico Crudu's research 
benefited from financial support provided by the University of Siena via the F-CUR 
grant 2274-2022-CF-PSR2021-FCUR 001.


\bigskip

\end{document}